\documentclass[12pt,reqno]{article}

\usepackage[usenames]{color}
\usepackage{amssymb}
\usepackage{graphicx}
\usepackage{amscd}

\usepackage{amsthm}
\newtheorem{theorem}{Theorem}

\newtheorem{proposition}[theorem]{Proposition}
\newtheorem{corollary}[theorem]{Corollary}

\theoremstyle{definition}

\newtheorem{example}[theorem]{Example}

\usepackage[colorlinks=true,
linkcolor=webgreen, filecolor=webbrown,
citecolor=webgreen]{hyperref}

\definecolor{webgreen}{rgb}{0,.5,0}
\definecolor{webbrown}{rgb}{.6,0,0}

\usepackage{color}

\usepackage{float}

\usepackage{graphics,amsmath,amssymb}
\usepackage{amsfonts}
\usepackage{latexsym}
\usepackage{epsf}

\setlength{\textwidth}{6.5in} \setlength{\oddsidemargin}{.1in}
\setlength{\evensidemargin}{.1in} \setlength{\topmargin}{-.5in}
\setlength{\textheight}{8.9in}

\newcommand{\seqnum}[1]{\href{http://www.research.att.com/cgi-bin/access.cgi/as/~njas/sequences/eisA.cgi?Anum=#1}{\underline{#1}}}

\begin{document}

\begin{center}
\vskip 1cm{\LARGE\bf Eulerian polynomials as moments, via exponential Riordan arrays} \vskip 1cm \large
Paul Barry\\
School of Science\\
Waterford Institute of Technology\\
Ireland\\
\href{mailto:pbarry@wit.ie}{\tt pbarry@wit.ie} \\

\end{center}
\vskip .2 in

\begin{abstract} Using the theory of exponential Riordan arrays and orthogonal polynomials, we demonstrate that the ``descending power'' Eulerian polynomials, and their once shifted sequence, are moment sequences for simple families of orthogonal polynomials, which we characterize in terms of their three-term recurrence. We obtain the generating functions of the polynomial sequences in terms of continued fractions, and we also calculate their Hankel transforms.
\end{abstract}

\section{Introduction}
The Eulerian polynomials \cite{Comtet, Foata, Hirzebruch, Luschny}
$$P_n(x)=\sum_{k=0}^n W_{n,k} x^k$$ form the sequence ${P_n(x)}$ which begins
$$P_0(x)=1, P_1(x)=1, P_2(x)=1+x,P_3(x)=1+4x+x^2,\ldots,$$ with the well-known triangle of Eulerian numbers \cite{Concrete}
\begin{displaymath}\left(\begin{array}{ccccccc} 1 & 0 & 0
& 0 & 0 & 0 & \ldots \\1 & 0 & 0 & 0 & 0 & 0 & \ldots \\
1 & 1 & 0 & 0 & 0 & 0 & \ldots \\ 1 & 4 & 1 & 0
& 0 & 0 & \ldots \\ 1 & 11 & 11 & 1 & 0 & 0 & \ldots
\\1 & 26 & 66 & 26 & 1 & 0 &\ldots\\ \vdots & \vdots & \vdots & \vdots & \vdots
& \vdots & \ddots\end{array}\right) \end{displaymath}
as coefficient array. These coefficients $W_{n,k}$ obey the recurrence \cite{Hirzebruch}
$$W_{n,k}=(k+1) W_{n-1,k}+(n-k) W_{n-1, k-1}$$ with appropriate boundary conditions. The closed form expression
$$W_{n,k}=\sum_{i=0}^{n-k} (-1)^i \binom{n+1}{i} (n-k-i)^n$$ holds. The polynomials ${P_n(x)}$ were introduced by Euler \cite{Euler} in the form
$$\sum_{k=0}^{\infty}(k+1)^n t^k = \frac{P_n(t)}{(1-t)^{n+1}}.$$
They have exponential generating function
$$\sum_{n=0}^{\infty} P_n(x) \frac{t^n}{n!} = \frac{(1-x)e^{(1-x)t}}{1-x e^{(1-x)t}}.$$
We have $$P_n(x)=\sum_{k=0}^n A_{n,k}x^{n-k},$$ and hence we can regard them as ``descending power'' Eulerian polynomials.

In this note we show that the sequence of Eulerian polynomials $P_n(x)$ is the moment sequence of a family of orthogonal polynomials. In addition, we show that the sequence of shifted Eulerian polynomials $P_{n+1}(x)$ is similarly the moment sequence of a family of orthogonal polynomials.
For this, we will require three results from the theory of exponential Riordan arrays (see Appendix for an introduction to exponential Riordan arrays). These are \cite{Barry_Meixner, Barry_Moment}
\begin{enumerate}
\item The inverse of an exponential Riordan array $[g,f]$ is the coefficient array of a family of orthogonal polynomials if and only if the production matrix of $[g,f]$ is tri-diagonal;
\item If the production matrix of $[g,f]$ is tri-diagonal, then the elements of the first column of $[g,f]$ are the moments of the corresponding family of orthogonal polynomials;
\item The bivariate generating function of the production matrix of $[g,f]$ is given by
$$e^{xy}(Z(x)+A(x)y)$$ where
$$A(x)=f'(\bar{f}(x)),$$ and
$$Z(x)=\frac{g'(\bar{f}(x))}{g(\bar{f}(x))},$$ where $\bar{f}(x)$ is the compositional inverse (series reversion) of
$f(x)$.
\end{enumerate}
A quick introduction to exponential Riordan arrays can be found in the Appendix to this note. For general information on orthogonal polynomials and moments, see \cite{Chihara, Gautschi, Szego}. Continued fractions will be referred to in the sequel; \cite{Wall} is a general reference, while \cite{Kratt, Kratt1} discuss the connection of continued fractions to orthogonal polynomials, moments and Hankel transforms \cite{Layman, Radoux}. We recall that for a given sequence $a_n$ its Hankel transform is the sequence of determinants $h_n=|a_{i+j}|_{0 \le i,j \le n}$. Many interesting examples of number triangles, including exponential Riordan arrays, can be found in Neil Sloane's On-Line
Encyclopedia of Integer Sequences \cite{SL1, SL2}. Sequences are frequently referred to by their
OEIS number. For instance, the binomial matrix (Pascal's triangle) $\mathbf{B}$ with $(n,k)$-th element $\binom{n}{k}$ is \seqnum{A007318}.
\section{The Eulerian polynomials $P_n(x)$}
We consider the sequence with e.g.f.
$$\frac{(\alpha-\beta)e^{(\alpha-\beta)t}}{\alpha-\beta e^{(\alpha-\beta)t}}.$$ This is the sequence that begins
$$1, \alpha, \alpha(\alpha+\beta), \alpha(\alpha^2+4\alpha \beta+\beta^2), \alpha(\alpha^3+11\alpha^2 \beta+11\alpha \beta^2+\beta^3),\ldots.$$ Setting $\alpha=1$ and $\beta=x$ gives us the Eulerian polynomials $P_n(x)$.
We have the
\begin{proposition} The production matrix of the exponential Riordan array
$$\left[\frac{(\alpha-\beta)e^{(\alpha-\beta)t}}{\alpha-\beta e^{(\alpha-\beta)t}}, \frac{e^{(\alpha-\beta)t}-1}{\alpha-\beta e^{(\alpha-\beta)t}}\right]$$ is tri-diagonal.
\end{proposition}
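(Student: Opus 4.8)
The plan is to invoke the third result recalled in the Introduction, which expresses the bivariate generating function of the production matrix of $[g,f]$ as $e^{xy}\bigl(Z(x)+A(x)y\bigr)$, where $A(x)=f'(\bar f(x))$ and $Z(x)=g'(\bar f(x))/g(\bar f(x))$. My first move is to read off what tridiagonality means for this generating function. Expanding $e^{xy}=\sum_{m\ge 0}(xy)^m/m!$ against $Z(x)+A(x)y$, a monomial $x^j$ coming from $Z$ contributes to the coefficient of $x^n y^k$ only when $k=n-j$, while a monomial $x^j$ coming from $A(x)y$ contributes only when $k=n-j+1$. Thus the nonzero entries of the production matrix are confined to the three central diagonals exactly when $Z$ is a polynomial of degree at most $1$ and $A$ is a polynomial of degree at most $2$. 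The proposition therefore reduces to computing $\bar f$, $A$ and $Z$ and verifying these two degree bounds.

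The computational core is the series reversion of $f$. Writing $c=\alpha-\beta$ and substituting $u=e^{ct}$, so that $f=(u-1)/(\alpha-\beta u)$, I would solve $x=(u-1)/(\alpha-\beta u)$ for $u$ to obtain $u=(1+\alpha x)/(1+\beta x)$, and hence
$$\bar f(x)=\frac{1}{\alpha-\beta}\log\frac{1+\alpha x}{1+\beta x}.$$
The decisive observation is that evaluating $\alpha-\beta u$ at $t=\bar f(x)$ collapses the denominator to $(\alpha-\beta)/(1+\beta x)$; this is what forces the subsequent compositions to be rational of low degree rather than transcendental.

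With $\bar f$ available I would differentiate. A short calculation gives $f'(t)=c^2u/(\alpha-\beta u)^2$ and $g'(t)/g(t)=c\alpha/(\alpha-\beta u)$, and substituting $u=(1+\alpha x)/(1+\beta x)$ together with $\alpha-\beta u=c/(1+\beta x)$ yields
$$A(x)=(1+\alpha x)(1+\beta x),\qquad Z(x)=\alpha(1+\beta x).$$
Since $A$ has degree $2$ and $Z$ has degree $1$, the criterion of the first paragraph is satisfied and the production matrix is tridiagonal. As a by-product the same expansion records the three diagonals, giving diagonal entries $\alpha+(\alpha+\beta)n$, subdiagonal entries $\alpha\beta\,n^2$, and unit superdiagonal, which should feed directly into the three-term recurrence later.

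I expect the only real obstacle to be the reversion step and, more precisely, confirming that the two compositions genuinely simplify to polynomials; a priori $f'(\bar f(x))$ and $g'(\bar f(x))/g(\bar f(x))$ could retain rational or logarithmic parts, and it is the particular algebraic cancellation $\alpha-\beta u\mapsto c/(1+\beta x)$ that guarantees they do not. Everything after that is routine bookkeeping of coefficients in the generating function.
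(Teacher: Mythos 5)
Your proposal is correct and follows essentially the same route as the paper: compute $\bar f$, then $A(x)=f'(\bar f(x))=(1+\alpha x)(1+\beta x)$ and $Z(x)=g'(\bar f(x))/g(\bar f(x))=\alpha(1+\beta x)$, and conclude tridiagonality from the form $e^{xy}(Z(x)+A(x)y)$ of the production matrix's generating function. Your only additions --- the explicit degree criterion ($\deg Z\le 1$, $\deg A\le 2$) that the paper leaves implicit, and the substitution $u=e^{(\alpha-\beta)t}$ to organize the algebra --- are presentational refinements of the same argument, and your stated diagonals agree with the paper's displayed production matrix.
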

\begin{proof}
Writing the above exponential Riordan array as $[g,f]$, we have
$$f(t)=\frac{e^{(\alpha-\beta)t}-1}{\alpha-\beta e^{(\alpha-\beta)t}}$$ and hence
$$f'(t)=\frac{e^{(\alpha+\beta)t}(\alpha-\beta)^2}{\beta e^{\alpha t}-\alpha e^{\beta t}},$$ and
$$\bar{f}(t)=\frac{1}{\alpha-\beta}\ln\left(\frac{\alpha t+1}{\beta t+1}\right).$$
Then $$A(t)=f'(\bar{f}(t))=(\alpha t+1)(\beta t+1)=1+(\alpha+\beta)t+ \alpha \beta t^2.$$
We have $$ g(t)=\frac{(\alpha-\beta)e^{(\alpha-\beta)t}}{\alpha-\beta e^{(\alpha-\beta)t}}$$ and
hence $$g'(t)=\frac{\alpha e^{(\alpha+\beta)t}(\alpha-\beta)^2}{(\beta e^{\alpha t}-\alpha e^{\beta t})^2},$$
and so $$Z(t)=\frac{g'(\bar{f}(t))}{g(\bar{f}(t))}=\alpha (\beta t+1)=\alpha+\alpha \beta t.$$
Thus the production matrix, which has bivariate g.f. given by
$$ e^{t y}(\alpha+\alpha \beta t +(1+(\alpha+\beta)t+ \alpha \beta t^2)y),$$ is tri-diagonal.
\end{proof}
We note that the production matrix takes the form
\begin{displaymath}\left(\begin{array}{ccccccc}
\alpha
& 1 & 0 & 0 & 0 & 0 & \ldots \\\alpha \beta  & 2\alpha+\beta & 1 & 0 & 0 & 0 &
\ldots \\ 0 & 4\alpha \beta & 3\alpha+2 \beta & 1 & 0 & 0 & \ldots \\ 0 & 0 & 9 \alpha \beta & 4\alpha+3\beta  & 1
&
0 & \ldots \\ 0 & 0 & 0 & 16\alpha \beta & 5\alpha+4 \beta & 1 & \ldots \\0 & 0  & 0 & 0
&
25\alpha \beta & 6\alpha+5 \beta
&\ldots\\ \vdots & \vdots & \vdots & \vdots & \vdots & \vdots
&
\ddots\end{array}\right).\end{displaymath}
\noindent
For completeness, we note that while in the special case $\alpha=\beta$ the Riordan array is not obviously well-defined, the production matrix is, and it leads in  this special case to the exponential Riordan array
$$\left[\frac{1}{1-\alpha t},\frac{t}{1-\alpha t}\right]$$ which has general element $\binom{n}{k}\frac{n!}{k!}\alpha^{n-k}$. In the case $\alpha=\beta=1$, we get the exponential Riordan array
 $$\left[\frac{1}{1-t}, \frac{t}{1-t}\right]$$ whose inverse is the coefficient array of the Laguerre polynomials \cite{Lah}.

Returning now to the Eulerian polynomials, we set $\alpha=1$ and $\beta=x$, to get
\begin{theorem}
The Eulerian polynomials $P_n(x)$ are the moments of the family of orthogonal polynomials $Q_n(t)$ defined by
$Q_0(t)=1$, $Q_1(t)=t-1$, and
$$Q_n(t)=(t-((n-1)x+n))Q_{n-1}(t)-(n-1)^2 x Q_{n-2}(t).$$
\end{theorem}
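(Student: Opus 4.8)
The plan is to specialize the Proposition to $\alpha=1$, $\beta=x$ and then read the desired three-term recurrence directly off the tri-diagonal production matrix, relying on the three general facts about exponential Riordan arrays recalled in the introduction.

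First I would set $\alpha=1$ and $\beta=x$ throughout, so that the array $[g,f]$ has $g(t)=\frac{(1-x)e^{(1-x)t}}{1-xe^{(1-x)t}}$ and $f(t)=\frac{e^{(1-x)t}-1}{1-xe^{(1-x)t}}$. By the Proposition its production matrix is tri-diagonal, so result (1) shows that $[g,f]^{-1}$ is the coefficient array of a family of orthogonal polynomials $Q_n(t)$, and result (2) shows that the first column of $[g,f]$ is the moment sequence of this family. Since the first column of an exponential Riordan array $[g,f]$ is the sequence with exponential generating function $g(t)$, and this $g(t)$ is exactly the e.g.f. of the Eulerian polynomials, the first column is $(P_n(x))_{n\ge0}$. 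Hence $P_n(x)$ is the moment sequence attached to $Q_n(t)$, which is the content of the theorem once the recurrence is pinned down.

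Second, I would extract the recurrence coefficients from the displayed production matrix at $\alpha=1$, $\beta=x$. Its row $n$ has diagonal entry $(n+1)+nx$, subdiagonal entry $n^2x$, and superdiagonal entry $1$. Because the superdiagonal consists entirely of $1$'s, this matrix is precisely the matrix of multiplication by $t$ in the monic basis $\{Q_n\}$, i.e. $tQ_n=Q_{n+1}+a_nQ_n+b_nQ_{n-1}$, with $a_n$ the diagonal and $b_n$ the subdiagonal entry of row $n$. Reading off $a_n=(n+1)+nx$ and $b_n=n^2x$ gives $Q_{n+1}(t)=(t-((n+1)+nx))Q_n(t)-n^2x\,Q_{n-1}(t)$, and reindexing $n\mapsto n-1$ produces exactly $Q_n(t)=(t-((n-1)x+n))Q_{n-1}(t)-(n-1)^2x\,Q_{n-2}(t)$. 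The initial data follow from $a_0=1$, giving $Q_0(t)=1$ and $Q_1(t)=t-1$.

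The main difficulty is bookkeeping rather than mathematical depth: one must be careful that the normalization of the superdiagonal to $1$'s is exactly what identifies the production matrix with the multiplication operator in the \emph{monic} orthogonal basis, so that the diagonal and subdiagonal entries pass into the recurrence coefficients $a_n$ and $b_n$ with no rescaling. I would guard against an off-by-one error by checking the correspondence on the low-order cases, verifying directly that $Q_1(t)=t-1$ and $Q_2(t)=(t-(x+2))(t-1)-x$ agree with the matrix entries in rows $0$ and $1$. Beyond this, no estimate or construction is needed: the entire content is the identification of the first column of $[g,f]$ with $(P_n(x))$ together with the correct read-off of $a_n$ and $b_n$.
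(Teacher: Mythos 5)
Your proposal is correct and follows essentially the same route as the paper: specialize the Proposition to $\alpha=1$, $\beta=x$, invoke the three facts about exponential Riordan arrays to identify the first column (with e.g.f.\ $g$) as the moment sequence, and read the recurrence coefficients $a_n=(n+1)+nx$, $b_n=n^2x$ off the tri-diagonal production matrix. Your explicit justification that the production matrix is the Jacobi (multiplication-by-$t$) matrix for the monic basis, and your derivation of $Q_0$, $Q_1$ from $a_0=1$, merely spell out what the paper leaves implicit by citing its references and by reading the initial terms from the displayed inverse array.
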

\begin{proof}
The initial polynomial terms of the sequence $Q_n(t)$ can be read from the elements of
$$\left[\frac{(1-x)e^{(1-x)t}}{1-xe^{(1-x)t}},\frac{e^{(1-x)t}-1}{1-xe^{(1-x)t}}\right]^{-1}=\left[\frac{1}{1+t},\frac{1}{1-x}\ln\left(\frac{1+t}{1+xt}\right)\right],$$ which begins
\begin{displaymath}\left(\begin{array}{ccccc} 1 & 0 &
0
& 0  & \ldots \\-1 & 1 & 0 & 0  & \ldots \\ 2 & -x-3
& 1 & 0 & \ldots \\ -6 & 2x^2+5x+11 & -3(x+2) & 1  & \ldots \\
\vdots &
\vdots & \vdots & \vdots  &
\ddots\end{array}\right).\end{displaymath}
Hence in particular $Q_0(t)=1$ and $Q_1(t)=t-1$. The three-term recurrence is derived from the production matrix, which in this case is \begin{displaymath}\left(\begin{array}{ccccccc}
1
& 1 & 0 & 0 & 0 & 0 & \ldots \\ x  & 2+x & 1 & 0 & 0 & 0 &
\ldots \\ 0 & 4 x & 3+2 x & 1 & 0 & 0 & \ldots \\ 0 & 0 & 9  x & 4+3x  & 1
&
0 & \ldots \\ 0 & 0 & 0 & 16 x & 5+4 x & 1 & \ldots \\0 & 0  & 0 & 0
&
25 x & 6+5 x
&\ldots\\ \vdots & \vdots & \vdots & \vdots & \vdots & \vdots
&
\ddots\end{array}\right).\end{displaymath}
\end{proof}
\begin{corollary} The sequence of Eulerian polynomials $P_n(x)$ has ordinary generating function given by the continued fraction
$$\cfrac{1}{1-t-
\cfrac{xt^2}{1-(2+x)t-
\cfrac{4xt^2}{1-(3+2x)t-
\cfrac{9xt^2}{1-\cdots}}}}.$$
\end{corollary}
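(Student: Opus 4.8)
The plan is to invoke the classical correspondence between the three-term recurrence of a family of orthogonal polynomials and the Jacobi continued fraction (J-fraction) expansion of the ordinary generating function of its moments; see \cite{Wall, Kratt, Kratt1}. By the preceding theorem, the Eulerian polynomials $P_n(x)$ are precisely these moments, so it suffices to read off the recurrence coefficients and substitute them into the J-fraction template.

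First I would recall the general statement: if a monic orthogonal polynomial family satisfies $Q_n(t) = (t - a_{n-1})Q_{n-1}(t) - b_{n-1}Q_{n-2}(t)$ with moment sequence $\mu_n$ normalized by $\mu_0 = 1$, then
\[
\sum_{n \ge 0} \mu_n t^n = \cfrac{1}{1 - a_0 t - \cfrac{b_1 t^2}{1 - a_1 t - \cfrac{b_2 t^2}{1 - a_2 t - \cdots}}}.
\]
Equivalently, these $a_n$ and $b_n$ are exactly the diagonal and subdiagonal entries of the tridiagonal production matrix computed in the theorem.

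Next I would extract the coefficients. Reindexing the recurrence $Q_n(t) = (t - ((n-1)x+n))Q_{n-1}(t) - (n-1)^2 x\, Q_{n-2}(t)$ gives $a_n = n(x+1)+1$, i.e. the diagonal sequence $1, 2+x, 3+2x, 4+3x, \ldots$, and $b_n = n^2 x$, i.e. the subdiagonal sequence $x, 4x, 9x, 16x, \ldots$; both are visible directly in the displayed production matrix. Substituting $a_0 = 1$, $a_1 = 2+x$, $a_2 = 3+2x, \ldots$ and $b_1 = x$, $b_2 = 4x$, $b_3 = 9x, \ldots$ into the template yields exactly the claimed continued fraction, using $\mu_0 = P_0(x) = 1$ for the normalization.

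There is no serious obstacle here; the only point requiring care is the bookkeeping of indices, matching the shifted argument in the recurrence to the $a_n, b_n$ of the J-fraction so that the $n$th partial numerator is $n^2 x\, t^2$ rather than being off by one. As a sanity check I would expand the first few convergents of the continued fraction as a power series in $t$ and confirm that the coefficients reproduce $P_0(x) = 1$, $P_1(x) = 1$, $P_2(x) = 1 + x$, and $P_3(x) = 1 + 4x + x^2$.
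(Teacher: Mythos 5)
Your proposal is correct and is precisely the argument the paper leaves implicit: the corollary is stated without proof as an immediate consequence of the theorem, via the classical correspondence (cf.\ \cite{Wall, Kratt, Kratt1}) between the tridiagonal production matrix (equivalently, the three-term recurrence coefficients $a_n = n(x+1)+1$, $b_n = n^2x$) and the J-fraction for the moment generating function. Your index bookkeeping and the expansion check through $P_3(x)=1+4x+x^2$ are both accurate, so nothing is missing.
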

\begin{corollary}
The Hankel transform of the sequence of Eulerian polynomials $P_n(x)$ is given by
$$h_n = x^{\binom{n+1}{2}}\prod_{k=1}^n k!^2.$$
\end{corollary}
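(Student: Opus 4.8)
The plan is to invoke the classical evaluation of Hankel determinants of a moment sequence in terms of the coefficients of the associated Jacobi continued fraction (see \cite{Kratt, Kratt1}). For a sequence $\mu_n$ with $\mu_0=1$ whose ordinary generating function has the J-fraction form
$$\cfrac{1}{1 - b_0 t - \cfrac{\lambda_1 t^2}{1 - b_1 t - \cfrac{\lambda_2 t^2}{1 - \cdots}}},$$
the Hankel determinant $h_n=|\mu_{i+j}|_{0\le i,j\le n}$ is given by the product
$$h_n=\prod_{k=1}^n \lambda_k^{\,n+1-k}=\lambda_1^n\lambda_2^{n-1}\cdots\lambda_n.$$
Since $P_0(x)=1$, this formula applies directly to the moment sequence at hand, and the whole task reduces to identifying the $\lambda_k$ and simplifying the resulting product.

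Next I would read off the $\lambda_k$. From the continued fraction of the preceding corollary --- equivalently, from the subdiagonal entries $x,4x,9x,16x,\ldots$ of the production matrix computed in the theorem, which coincide with the coefficients $c_{k+1}=k^2x$ of the three-term recurrence for $Q_n$ --- we have $\lambda_k=k^2x$ for all $k\ge 1$. Substituting into the product gives
$$h_n=\prod_{k=1}^n (k^2x)^{\,n+1-k}=x^{\sum_{k=1}^n (n+1-k)}\prod_{k=1}^n k^{\,2(n+1-k)}.$$
The exponent of $x$ is $\sum_{k=1}^n(n+1-k)=\sum_{j=1}^n j=\binom{n+1}{2}$, which produces the factor $x^{\binom{n+1}{2}}$.

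It remains to identify the integer part $\prod_{k=1}^n k^{2(n+1-k)}$ with $\prod_{k=1}^n k!^2$; this is the one step requiring a small combinatorial observation rather than pure bookkeeping, and I expect it to be the main (if modest) obstacle. The key is the reindexing identity $\prod_{k=1}^n k!=\prod_{j=1}^n j^{\,n+1-j}$, which holds because the factor $j$ occurs in $k!$ exactly for the $n+1-j$ values $k=j,j+1,\ldots,n$. Squaring this identity yields $\prod_{k=1}^n k!^2=\prod_{j=1}^n j^{2(n+1-j)}$, which is precisely the integer part above, and combining with the power of $x$ gives the claimed formula. The determinant evaluation itself is standard once the J-fraction is in hand, so if one prefers to avoid the reindexing in closed form, a direct induction on $n$ (peeling off the $k=n$ factor from both products) provides an equivalent and entirely routine verification.
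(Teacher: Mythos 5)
Your proposal is correct and follows exactly the route the paper intends: the paper states this corollary without proof, as an immediate consequence of the preceding continued-fraction corollary together with the classical Heilermann/J-fraction evaluation $h_n=\prod_{k=1}^n\lambda_k^{\,n+1-k}$ (via the references to Krattenthaler), with $\lambda_k=k^2x$. Your reindexing step $\prod_{k=1}^n k!=\prod_{j=1}^n j^{\,n+1-j}$ is the right way to close the bookkeeping, so there is nothing to add.
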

\section{The shifted Eulerian polynomials $P_{n+1}(x)$}
For the shifted Eulerian polynomials $P_{n+1}(x)$, we consider the exponential Riordan array
$$[g'(t), f(t)],$$ where
$$g'(t)=\frac{(\alpha-\beta)^2 e^{(\alpha+\beta)t}}{\beta e^{\alpha t}-\alpha e^{\beta t}},$$ where we retain the use of
$g(t)=\frac{(\alpha-\beta)e^{(\alpha-\beta)t}}{\alpha-\beta e^{(\alpha-\beta)t}}$ from the previous section.

When $\alpha=1$ and $\beta=x$, $g'(t)$ generates the shifted sequence $P_{n+1}(x)$.
We then have
\begin{proposition} The production matrix of the exponential Riordan array
$$\left[\frac{(\alpha-\beta)^2 e^{(\alpha+\beta)t}}{\beta e^{\alpha t}-\alpha e^{\beta t}}, \frac{e^{(\alpha-\beta)t}-1}{\alpha-\beta e^{(\alpha-\beta)t}}\right]$$ is tri-diagonal.
\end{proposition}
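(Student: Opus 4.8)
The plan is to mirror the proof of the first Proposition, exploiting the fact that the second component $f$ of the new array is identical to the one used there. Writing the array as $[G,f]$ with $G(t)=\frac{(\alpha-\beta)^2 e^{(\alpha+\beta)t}}{\beta e^{\alpha t}-\alpha e^{\beta t}}$ and $f(t)=\frac{e^{(\alpha-\beta)t}-1}{\alpha-\beta e^{(\alpha-\beta)t}}$, I first observe that $f$ is unchanged from before. Hence its compositional inverse $\bar f(t)=\frac{1}{\alpha-\beta}\ln\left(\frac{\alpha t+1}{\beta t+1}\right)$ and the quantity $A(t)=f'(\bar f(t))=(\alpha t+1)(\beta t+1)=1+(\alpha+\beta)t+\alpha\beta t^2$ are available immediately, with no new work. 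By result 3 of the Introduction, the bivariate generating function of the production matrix is $e^{ty}(Z(t)+A(t)y)$, so once I show that $Z(t)$ is linear (that $A(t)$ is quadratic being already in hand), the production matrix will be tri-diagonal.

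The only genuine computation is therefore $Z(t)=G'(\bar f(t))/G(\bar f(t))$. Rather than differentiate the quotient directly, I would use the logarithmic derivative. Writing $G=N/D$ with $N=(\alpha-\beta)^2 e^{(\alpha+\beta)t}$ and $D=\beta e^{\alpha t}-\alpha e^{\beta t}$, one has $\frac{G'}{G}=\frac{N'}{N}-\frac{D'}{D}$, where $\frac{N'}{N}=\alpha+\beta$ is constant. This reduces the problem to evaluating $\frac{D'}{D}=\frac{\alpha\beta(e^{\alpha t}-e^{\beta t})}{\beta e^{\alpha t}-\alpha e^{\beta t}}$ at $t=\bar f(t)$.

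The step I expect to be the main obstacle is precisely this substitution, since $\bar f$ supplies only the single combination $e^{(\alpha-\beta)\bar f(t)}=\frac{\alpha t+1}{\beta t+1}$, not the individual exponentials $e^{\alpha\bar f}$ and $e^{\beta\bar f}$. The trick is to divide the numerator and denominator of $D'/D$ by $e^{\beta t}$, so that everything is expressed through $e^{(\alpha-\beta)t}$ alone; substituting the known value and clearing the common factor $(\beta t+1)^{-1}$ then collapses the expression to $\frac{D'(\bar f(t))}{D(\bar f(t))}=-\alpha\beta t$. Consequently $Z(t)=(\alpha+\beta)-(-\alpha\beta t)=\alpha+\beta+\alpha\beta t$, which is indeed linear, and the resulting bivariate generating function $e^{ty}\big((\alpha+\beta+\alpha\beta t)+(1+(\alpha+\beta)t+\alpha\beta t^2)y\big)$ exhibits the production matrix as tri-diagonal. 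As a consistency check I would finally set $\alpha=1$, $\beta=x$ and confirm that the off-diagonal and diagonal entries take the tri-diagonal shape compatible with the shifted moment sequence $P_{n+1}(x)$.
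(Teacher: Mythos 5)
Your method is exactly the paper's method---reuse $A(t)=f'(\bar f(t))=(\alpha t+1)(\beta t+1)$ from the first proposition and show that $Z$ is linear---and your logarithmic-derivative computation is algebraically sound \emph{for the expression as printed}. But there is a discrepancy you need to confront: the printed first component is a misprint in the paper. The section defines the array as $[g'(t),f(t)]$, where $g$ is the function of Section 2, and the honest derivative is
$$g'(t)=\frac{\alpha(\alpha-\beta)^2e^{(\alpha+\beta)t}}{\left(\beta e^{\alpha t}-\alpha e^{\beta t}\right)^2},$$
with the denominator \emph{squared} and an extra factor $\alpha$. The expression in the proposition statement instead equals $(\beta-\alpha)e^{\beta t}g(t)$: it has constant term $\beta-\alpha\neq 1$ (so it is not even a normalized e.g.f.), and it does not generate $P_{n+1}(x)$. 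Working with the printed function you obtain $Z(t)=\alpha+\beta+\alpha\beta t$; the paper, computing $Z(t)=g''(\bar f(t))/g'(\bar f(t))$ for the true $g'$ (where the squared denominator contributes a factor $2$ to the logarithmic derivative), obtains $Z(t)=(\alpha+\beta)+2\alpha\beta t$.

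The coefficient $2$ is not cosmetic. With your $Z$, the production matrix has subdiagonal $n^2\alpha\beta$ (the same as in the \emph{first} proposition) and its moment sequence at $\alpha=1,\beta=x$ begins $1,\ 1+x,\ (1+x)^2+x=1+3x+x^2$; since $P_3(x)=1+4x+x^2$, the consistency check you defer to the end of your proof would in fact \emph{fail}. The paper's $Z$ gives subdiagonal $n(n+1)\alpha\beta$, i.e.\ $2\alpha\beta,6\alpha\beta,12\alpha\beta,\dots$, which is exactly what the production matrix displayed after the proposition and the subsequent theorem (three-term recurrence with coefficient $n(n-1)x$ and moments $P_{n+1}(x)$) require. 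So your argument does establish tri-diagonality of the array you analyzed---any linear $Z$ does---but it is not about the intended array, and the production matrix it yields is the wrong one for the rest of the section. The fix is small: run your $N/D$ shortcut on $g'$ as above, with $D=\left(\beta e^{\alpha t}-\alpha e^{\beta t}\right)^2$, so that the term $-D'/D$ evaluated at $\bar f$ becomes $+2\alpha\beta t$ rather than $+\alpha\beta t$.
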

\begin{proof} As in the previous proposition, we obtain
$$A(t)=f'(\bar{f}(t))=(\alpha t+1)(\beta t+1)=1+(\alpha+\beta)t+ \alpha \beta t^2,$$
where
$$\bar{f}(t)=\frac{1}{\alpha-\beta}\ln\left(\frac{\alpha t+1}{\beta t+1}\right).$$
Then
$$Z(t)=\frac{g''(\bar{f}(t))}{g'(\bar{f}(t))}=(\alpha+\beta)+2\alpha \beta t.$$ The bivariate generating function of the
production matrix is then
$$e^{ty} ((\alpha+\beta)+2\alpha \beta t+(1+(\alpha+\beta)t+ \alpha \beta t^2)y),$$ and hence the
production matrix is tri-diagonal.
\end{proof}
\noindent The production matrix in this case begins
\begin{displaymath}\left(\begin{array}{ccccc} \alpha+\beta & 1 &
0
& 0  & \ldots \\2 \alpha \beta & 2(\alpha+\beta) & 1 & 0  & \ldots \\ 0 & 6\alpha \beta
& 3(\alpha+\beta) & 1 & \ldots \\ 0 & 0 & 12\alpha \beta  & 4(\alpha +\beta)  & \ldots \\
\vdots &
\vdots & \vdots & \vdots  &
\ddots\end{array}\right).\end{displaymath}
In the case $\alpha=\beta$, we obtain the exponential Riordan array
$$\left[ \frac{1}{(1-\alpha t)^2},\frac{t}{1-\alpha t}\right],$$ with $(n,k)$-th element $\binom{n+1}{k+1}\frac{n!}{k!}\alpha^{n-k}$. For $\alpha=\beta=1$ this gives us
$$\left[\frac{1}{(1-t)^2}, \frac{t}{1-t}\right],$$ which is \seqnum{A105278}.

Specializing to the values $\alpha=1$, $\beta=x$, we get the
\begin{theorem} The shifted Eulerian polynomials $P_{n+1}(x)$ are the moments of the family of
orthogonal polynomials $R_n(t)$ given by $R_0(t)=1$, $R_1(t)=t-x-1$, and for $n >1$,
$$R_n(t)=(t-n(1+x))R_{n-1}(t)-n(n-1)x R_{n-2}(t).$$
\end{theorem}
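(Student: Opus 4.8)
The plan is to mirror the proof of the corresponding theorem for the unshifted polynomials $P_n(x)$, exploiting the three structural facts about exponential Riordan arrays recalled in the introduction. By the preceding Proposition, the production matrix of $[g',f]$ is tri-diagonal; hence by the first of those facts its inverse is the coefficient array of a family of orthogonal polynomials $R_n(t)$, and by the second fact the first column of $[g',f]$ furnishes the moment sequence of the $R_n$. So it remains only to identify that moment sequence and to read off the three-term recurrence from the explicit tri-diagonal matrix.

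First I would pin down the moments. Since the exponential generating function of the first column of an exponential Riordan array $[g,f]$ is $g$ itself (the $(n,0)$ entry being $n![t^n]g(t)$), the first column of $[g',f]$ has e.g.f. $g'(t)$. Because $g(t)$, specialized to $\alpha=1$, $\beta=x$, is the e.g.f. $\sum_{n\ge0} P_n(x)\,t^n/n!$, termwise differentiation gives $g'(t)=\sum_{n\ge0} P_{n+1}(x)\,t^n/n!$. Thus the moment sequence is precisely the shifted Eulerian polynomials $P_{n+1}(x)$, as desired.

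Next I would extract the recurrence. Specializing $\alpha=1$, $\beta=x$ in the production matrix displayed above the theorem, its superdiagonal entries are $1$, its main-diagonal entries are $(n+1)(1+x)$, and its subdiagonal entries are $n(n+1)x$. For a tri-diagonal production matrix with unit superdiagonal, main-diagonal entries $b_n$ and subdiagonal entries $c_n$, the associated orthogonal polynomials satisfy $R_n(t)=(t-b_{n-1})R_{n-1}(t)-c_{n-1}R_{n-2}(t)$. Substituting $b_{n-1}=n(1+x)$ and $c_{n-1}=n(n-1)x$ yields the claimed recurrence for $n>1$, while $b_0=1+x$ gives $R_1(t)=t-(1+x)=t-x-1$, with $R_0(t)=1$ by convention.

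All the real work has already been done: the one substantive hypothesis, tri-diagonality, was established in the preceding Proposition. The only points needing care are the differentiation argument that fixes the moments as exactly $P_{n+1}(x)$ rather than a reindexed variant, and the index shift $n\mapsto n-1$ in passing from production-matrix entries to recurrence coefficients. I do not expect any genuine obstacle here, since everything reduces to bookkeeping within the framework set up in the introduction and the previous section.
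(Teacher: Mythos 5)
Your proposal is correct and follows essentially the same route as the paper: invoke the tri-diagonality established in the preceding Proposition, use the two facts from the introduction to conclude that the first column of $[g',f]$ gives the moments, and read the three-term recurrence coefficients off the explicit production matrix. The only cosmetic differences are that the paper reads the initial polynomials $R_0, R_1$ from the explicitly inverted array $\left[\frac{1}{(1+t)(1+tx)},\frac{1}{1-x}\ln\left(\frac{1+t}{1+xt}\right)\right]$ rather than from the entry $b_0$ of the production matrix, and the paper records the identification $g'(t)=\sum_{n\ge 0}P_{n+1}(x)t^n/n!$ just before the Proposition rather than inside the proof, where you make it explicit.
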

\begin{proof}
The initial terms of the polynomial sequence $R_n(t)$ can be read from the elements of the inverse matrix
$$\left[\frac{(\alpha-\beta)^2 e^{(\alpha+\beta)t}}{\beta e^{\alpha t}-\alpha e^{\beta t}},\frac{e^{(1-x)t}-1}{1-xe^{(1-x)t}}\right]^{-1}=\left[\frac{1}{(1+t)(1+tx)},\frac{1}{1-x}\ln\left(\frac{1+t}{1+xt}\right)\right],$$
which begins
\begin{displaymath}\left(\begin{array}{ccccc} 1 & 0 &
0
& 0  & \ldots \\-x-1 & 1 & 0 & 0  & \ldots \\ 2x^2+2x+2 & -3(x+1)
& 1 & 0 & \ldots \\ -6(x^3+x^2+x+1) & 11x^2+14x+11 & -6(x+1) & 1  & \ldots \\
\vdots &
\vdots & \vdots & \vdots  &
\ddots\end{array}\right).\end{displaymath}
The three-term recurrence is derived from the production matrix, which in this case is \begin{displaymath}\left(\begin{array}{ccccccc}
1+x
& 1 & 0 & 0 & 0 & 0 & \ldots \\ 2x  & 2(1+x) & 1 & 0 & 0 & 0 &
\ldots \\ 0 & 6 x & 3(1+x) & 1 & 0 & 0 & \ldots \\ 0 & 0 & 12x   & 4(1+x)  & 1
&
0 & \ldots \\ 0 & 0 & 0 & 20 x & 5(1+x) & 1 & \ldots \\0 & 0  & 0 & 0
&
30 x & 6(1+x)
&\ldots\\ \vdots & \vdots & \vdots & \vdots & \vdots & \vdots
&
\ddots\end{array}\right).\end{displaymath}
\end{proof}
\begin{corollary} The sequence of shifted Eulerian polynomials $P_{n+1}(x)$ has ordinary generating function given by the continued fraction
$$\cfrac{1}{1-(1+x)t-
\cfrac{2xt^2}{1-2(1+x)t-
\cfrac{6xt^2}{1-3(1+x)t-
\cfrac{12xt^2}{1-\cdots}}}}.$$
\end{corollary}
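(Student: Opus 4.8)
The plan is to deduce the continued fraction directly from the preceding Theorem by invoking the classical correspondence between moment sequences of orthogonal polynomials and Jacobi-type continued fractions. Recall that if $(\mu_n)_{n\ge 0}$ is the moment sequence of a monic orthogonal family satisfying the three-term recurrence $R_n(t)=(t-c_{n-1})R_{n-1}(t)-\lambda_{n-1}R_{n-2}(t)$, then its ordinary generating function admits the expansion
\[
\sum_{n\ge 0}\mu_n t^n=\cfrac{1}{1-c_0 t-\cfrac{\lambda_1 t^2}{1-c_1 t-\cfrac{\lambda_2 t^2}{1-\cdots}}},
\]
a standard fact (see \cite{Wall, Kratt}). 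In the exponential Riordan-array framework the numbers $c_n$ and $\lambda_n$ are exactly the diagonal and subdiagonal entries of the tridiagonal production matrix, the superdiagonal consisting of $1$'s.

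First I would note that the preceding Theorem already identifies $P_{n+1}(x)$ as the moment sequence of the family $R_n(t)$ with $R_1(t)=t-x-1$ and $R_n(t)=(t-n(1+x))R_{n-1}(t)-n(n-1)x\,R_{n-2}(t)$. Comparing with the generic recurrence yields $c_n=(n+1)(1+x)$ for $n\ge 0$ and $\lambda_n=n(n+1)x$ for $n\ge 1$; equivalently these are the diagonal entries $1+x,\,2(1+x),\,3(1+x),\dots$ and the subdiagonal entries $2x,\,6x,\,12x,\dots$ exhibited in the production matrix displayed there.

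Then I would simply substitute these coefficients into the J-fraction. The successive denominators become $1-(1+x)t,\;1-2(1+x)t,\;1-3(1+x)t,\dots$ and the successive partial numerators become $2xt^2,\;6xt^2,\;12xt^2,\dots$, which is precisely the continued fraction in the statement.

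The argument is an application of known machinery, so no genuine obstacle arises; the only place demanding attention is the bookkeeping that aligns the indexing of the production-matrix entries with that of the J-fraction. In particular one should verify that the subdiagonal sequence here is $n(n+1)x$ (giving $2x,6x,12x,\dots$), in contrast with the $n^2 x$ appearing in the unshifted case of the earlier corollary, so that the partial numerators are reproduced correctly.
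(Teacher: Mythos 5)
Your proposal is correct and is exactly the argument the paper intends: the corollary follows from the Theorem by reading the recurrence coefficients $c_n=(n+1)(1+x)$ and $\lambda_n=n(n+1)x$ (equivalently, the diagonal and subdiagonal of the tridiagonal production matrix) and inserting them into the standard J-fraction expansion of a moment generating function. Your indexing bookkeeping, including the contrast with the $n^2x$ coefficients of the unshifted case, matches the paper's data precisely.
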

\begin{corollary} The Hankel transform of the shifted Eulerian polynomials $P_{n+1}(x)$ is given by
$$h_n= (2x)^{\binom{n+1}{2}}\prod_{k=1}^n \binom{k+2}{2}^{n-k}.$$
\end{corollary}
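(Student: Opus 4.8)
The plan is to invoke the classical evaluation of the Hankel determinants of a moment sequence in terms of the coefficients of its associated $J$-fraction (Heilermann's formula; see \cite{Kratt}). Recall that if a sequence $\mu_0,\mu_1,\ldots$ with $\mu_0=1$ has ordinary generating function expressible as a Jacobi continued fraction with successive ``numerator'' coefficients $\alpha_1 t^2,\alpha_2 t^2,\ldots$, then its Hankel transform is
$$h_n=\prod_{k=1}^n \alpha_k^{\,n+1-k}.$$
Since the zeroth moment here is $P_1(x)=1$, this formula applies directly to the sequence $P_{n+1}(x)$, and I would first note this normalization explicitly.

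Next I would read off the $\alpha_k$ from data already established in this section. From the continued fraction of the preceding corollary --- equivalently, from the subdiagonal $2x,6x,12x,20x,30x,\ldots$ of the tridiagonal production matrix of the Theorem, whose superdiagonal consists entirely of $1$'s, so that $\alpha_k$ equals the $(k,k-1)$ subdiagonal entry --- we obtain $\alpha_k=k(k+1)x$. Substituting into Heilermann's formula and extracting the power of $x$ gives
$$h_n=\prod_{k=1}^n\bigl(k(k+1)x\bigr)^{n+1-k}=x^{\binom{n+1}{2}}\prod_{k=1}^n\bigl(k(k+1)\bigr)^{n+1-k},$$
using $\sum_{k=1}^n(n+1-k)=\binom{n+1}{2}$.

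The remaining step is purely algebraic. Writing $k(k+1)=2\binom{k+1}{2}$ factors out $2^{\binom{n+1}{2}}$ and leaves $\prod_{k=1}^n\binom{k+1}{2}^{\,n+1-k}$; a shift of the product index then converts this into $\prod_{k=1}^n\binom{k+2}{2}^{\,n-k}$, where the $k=1$ factor $\binom{2}{2}^{n}=1$ on the left and the $k=n$ factor $\binom{n+2}{2}^{0}=1$ on the right are both trivial and drop out. Combining the extracted factors yields the stated
$$h_n=(2x)^{\binom{n+1}{2}}\prod_{k=1}^n\binom{k+2}{2}^{\,n-k}.$$

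The only genuine obstacle is bookkeeping: correctly matching the continued-fraction indexing to Heilermann's formula, and carrying out the reindexing so that the binomial shift from $\binom{k+1}{2}$ to $\binom{k+2}{2}$ comes out exactly. As a sanity check I would verify the case $n=1$ directly, $h_1=\mu_0\mu_2-\mu_1^2=P_3(x)-P_2(x)^2=(1+4x+x^2)-(1+x)^2=2x$, which matches $(2x)^{1}$. One could equally bypass Heilermann and derive $h_n$ from the $LDL^{\top}$ factorization of the Hankel matrix encoded by the production matrix, since the diagonal entries of $D$ are the partial products $\prod_{k\le j}\alpha_k$ and $h_n$ is their product; but the $J$-fraction route is the most direct, and it parallels the computation used for the unshifted corollary, where $\alpha_k=k^2 x$ leads analogously to $x^{\binom{n+1}{2}}\prod_{k=1}^n k!^{\,2}$.
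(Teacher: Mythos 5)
Your proposal is correct and follows exactly the route the paper intends: the paper states this corollary without proof, as an immediate consequence of the preceding continued-fraction corollary (equivalently, the subdiagonal $k(k+1)x$ of the production matrix) combined with the standard Heilermann/Krattenthaler formula $h_n=\prod_{k=1}^n\alpha_k^{n+1-k}$. Your algebraic reduction of $\prod_{k=1}^n\bigl(k(k+1)x\bigr)^{n+1-k}$ to $(2x)^{\binom{n+1}{2}}\prod_{k=1}^n\binom{k+2}{2}^{n-k}$ is accurate, so your write-up supplies precisely the details the paper leaves implicit.
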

\section{The Eulerian number triangles}
As with the Narayana numbers and their associated number triangles \cite{Narayana}, we can distinguish between three distinct but related triangles of Eulerian numbers.
Thus we have the triangle \seqnum{A173018} \cite{Concrete, Hirzebruch}
\begin{displaymath}\left(\begin{array}{ccccccc} 1 & 0 & 0
& 0 & 0 & 0 & \ldots \\1 & 0 & 0 & 0 & 0 & 0 & \ldots \\
1 & 1 & 0 & 0 & 0 & 0 & \ldots \\ 1 & 4 & 1 & 0
& 0 & 0 & \ldots \\ 1 & 11 & 11 & 1 & 0 & 0 & \ldots
\\1 & 26 & 66 & 26 & 1 & 0 &\ldots\\ \vdots & \vdots & \vdots & \vdots & \vdots
& \vdots & \ddots\end{array}\right) \end{displaymath} of Eulerian numbers
$W_{n,k}$ that obey the recurrence
$$W_{n,k}=(k+1) W_{n-1,k}+(n-k) W_{n-1, k-1}$$ with appropriate boundary conditions, for which the  closed form expression
$$W_{n,k}=\sum_{i=0}^{n-k} (-1)^i \binom{n+1}{i} (n-k-i)^n$$ holds.
We have the reversal of this triangle, which is the triangle \seqnum{A123125} of the coefficients $A_{n,k}$ \cite{Aigner} where
$$A_{n,k}=\sum_{i=0}^k (-1)^i \binom{n+1}{i} (k-i)^n,$$ which begins
\begin{displaymath}\left(\begin{array}{ccccccc} 1 & 0 & 0
& 0 & 0 & 0 & \ldots \\0 & 1 & 0 & 0 & 0 & 0 & \ldots \\
0 & 1 & 1 & 0 & 0 & 0 & \ldots \\ 0 & 1 & 4 & 1
& 0 & 0 & \ldots \\ 0 & 1 & 11 & 11 & 1 & 0 & \ldots
\\0 & 1 & 26 & 66 & 26 & 1 &\ldots\\ \vdots & \vdots & \vdots & \vdots & \vdots
& \vdots & \ddots\end{array}\right), \end{displaymath} and finally we have the Pascal-like triangle of
coefficients $$\tilde{A}_{n,k}=A_{n+1,k+1}=\sum_{i=0}^{k+1} (-1)^i \binom{n+2}{i} (k-i)^{n+1},$$ which begins
\begin{displaymath}\left(\begin{array}{ccccccc} 1 & 0 & 0
& 0 & 0 & 0 & \ldots \\1 & 1 & 0 & 0 & 0 & 0 & \ldots \\
1 & 4 & 1 & 0 & 0 & 0 & \ldots \\ 1 & 11 & 11 & 1
& 0 & 0 & \ldots \\ 1 & 26 & 66 & 26 & 1 & 0 & \ldots
\\1 & 57 & 302 & 302 & 57 & 1 &\ldots\\ \vdots & \vdots & \vdots & \vdots & \vdots
& \vdots & \ddots\end{array}\right). \end{displaymath} This is \seqnum{A008292}.

\noindent We have
$$\tilde{A}_{n,k}=(n-k+1)\tilde{A}_{n-1}{k-1}+(k+1)\tilde{A}_{n-1}{k},$$ with appropriate boundary conditions.
As with the Narayana numbers, each of these triangles has signficant combinatorial applications and it is often important to distinguish one from the other.
\begin{example} The sequence $a_n=\sum_{k=0}^n W_{n,k} 2^k$ is the sequence \seqnum{A000670} of preferential arrangements, or rankings of competitors in a race, with ties \cite{Mendelson}. The sequence
$$b_n=\sum_{k=0}^n A_{n,k}2^k = \sum_{k=0}^n W_{n,n-k} 2^k$$ or \seqnum{A000629} is the sequence of rankings of competitors in a race, with ties and dropouts \cite{MacHale}. Note that from our results above, the sequence $a_n$ has generating function given by
$$\cfrac{1}{1-x-
\cfrac{2x^2}{1-4x-
\cfrac{8x^2}{1-7x-
\cfrac{18x^2}{1-\cdots}}}}.$$
The g.f. of the sequence $a_{n+1}$ is given by
$$\cfrac{1}{1-3x-
\cfrac{4x^2}{1-6x-
\cfrac{12x^2}{1-9x-
\cfrac{24x^2}{1-\cdots}}}}.$$
In this case it happens that $b_n$ is the binomial transform of $a_n$, and hence \cite{Barry_CF} its g.f. has continued fraction expression
$$\cfrac{1}{1-2x-
\cfrac{2x^2}{1-5x-
\cfrac{8x^2}{1-8x-
\cfrac{18x^2}{1-\cdots}}}}.$$
\end{example}
\section{A related ODE}
The form of $f(t)$ above is related to a simple ODE. This arises as follows. In order to have a tri-diagonal production matrix, we need to have an expression of the form
$$A(z)=f'(\bar{f}(z))=1+\mu z+ \nu z^2.$$
Now substituting $z= f(t)$ we obtain
$$f'(\bar{f}(f(t)))=1 + \mu f(t)+ \nu f(t)^2$$ or
$$f'(t)=1+ \mu f(t)+ \nu f(t)^2$$ or
$$ \frac{dy}{dt} = 1+ \mu y +\nu y^2,$$ where $y=f(t)$.
In the Eulerian case above, we have
$$\frac{dy}{dt}=(1+\alpha y)(1+\beta y),$$ with initial condition $y(0)=0$. The form of $y=f(t)$  follows from this variant of the logistic equation.
\section{Appendix: exponential Riordan array}
 The \emph{exponential Riordan group} \cite
{Barry_Pascal, DeutschShap, ProdMat}, is a set of
infinite lower-triangular integer matrices, where each matrix
is defined by a pair
of generating functions $g(x)=g_0+g_1x+g_2x^2+\cdots$ and
$f(x)=f_1x+f_2x^2+\cdots$ where $g_0 \ne 0$ and $f_1\ne 0$. We usually assume that
$$g_0=f_1=1.$$
The associated
matrix is the matrix
whose $i$-th column has exponential generating function
$g(x)f(x)^i/i!$ (the first column being indexed by 0). The
matrix corresponding to
the pair $f, g$ is denoted by $[g, f]$.  The group law is given by \begin{displaymath}
[g,
f]\cdot [h,
l]=[g(h\circ f), l\circ f].\end{displaymath} The identity for
this law is $I=[1,x]$ and the inverse of $[g, f]$ is $[g,
f]^{-1}=[1/(g\circ
\bar{f}), \bar{f}]$ where $\bar{f}$ is the compositional
inverse of $f$.

If $\mathbf{M}$ is the matrix $[g,f]$, and
$\mathbf{u}=(u_n)_{n \ge 0}$
is an integer sequence with exponential generating function
$\mathcal{U}$
$(x)$, then the sequence $\mathbf{M}\mathbf{u}$ has
exponential
generating function $g(x)\mathcal{U}(f(x))$. Thus the row sums
of the array
$[g,f]$ have exponential generating function given by $g(x)e^{f(x)}$ since the sequence
$1,1,1,\ldots$ has exponential generating function $e^x$.

As an element of the group of exponential Riordan arrays, the binomial matrix $\mathbf{B}$ with $(n,k)$-th element $\binom{n}{k}$ is given by
 $\mathbf{B}=[e^x,x]$. By the above, the exponential
generating function of
its row sums is given by $e^x e^x = e^{2x}$, as expected
($e^{2x}$ is the e.g.f. of $2^n$).

To each exponential Riordan array $L=[g,f]$ is associated \cite{ProdMat_0, ProdMat} a matrix $P$ called its \emph{production} matrix, which has
bivariate g.f. given by
$$e^{xy}(Z(x)+A(x)y)$$ where
$$A(x)=f'(\bar{f}(x)), \quad Z(x)=\frac{g'(\bar{f}(x))}{g(\bar{f}(x))}.$$
We have $$P=L^{-1}\bar{L}$$ where $\bar{L}$ \cite{PPWW, Wall} is the matrix $L$ with its top row removed.

\bigskip
\hrule
\bigskip
\noindent 2010 {\it Mathematics Subject Classification}: Primary
11B83; Secondary 33C45, 42C05, 15B36, 15B05, 11C20.

\noindent \emph{Keywords:} Eulerian number, Eulerian polynomial, Euler's triangle, exponential Riordan array, orthogonal polynomials, moments, Hankel transform.

\bigskip
\hrule
\bigskip
\noindent Concerns sequences
\seqnum{A000629},
\seqnum{A000670},
\seqnum{A007318},
\seqnum{A008292},
\seqnum{A105278},
\seqnum{A123125},
\seqnum{A173018}.

\end{document}